\long\def\comment#1{}
\newtheorem{assumption}{Assumption}
\newtheorem{remark}{Remark}
\newtheorem{lemma}{Lemma}
\newtheorem{theorem}{Theorem}
\begin{document}

\setlength{\arraycolsep}{0.3em}

\title{Distributed Proximal Algorithms for Multi-Agent Optimization with Coupled Inequality Constraints
\thanks{}}

\author{Xiuxian Li, {\em Member, IEEE}, Gang Feng, {\em Fellow, IEEE}, and Lihua Xie, {\em Fellow, IEEE}
\thanks{This work was supported by grants from the Research Grants Council of Hong Kong (No. CityU-11207817), and the Ministry of Education of Singapore under Grant MoE Tier 1 RG72/19.}
\thanks{X. Li was with Department of Biomedical Engineering, City University of Hong Kong, Kowloon, Hong Kong, P. R. China, where this work was carried out. He is currently with School of Electrical and Electronic Engineering, Nanyang Technological University, 50 Nanyang Avenue, Singapore 639798 (e-mail: xxli@ieee.org).}
\thanks{G. Feng are with Department of Biomedical Engineering, City University of Hong Kong, Kowloon, Hong Kong, P. R. China (e-mail: megfeng@cityu.edu.hk).}
\thanks{L. Xie is with School of Electrical and Electronic Engineering, Nanyang Technological University, 50 Nanyang Avenue, Singapore 639798 (e-mail: elhxie@ntu.edu.sg).}
}

\maketitle

\setcounter{equation}{0}
\setcounter{figure}{0}
\setcounter{table}{0}

\begin{abstract}
This paper aims to address distributed optimization problems over directed and time-varying networks, where the global objective function consists of a sum of locally accessible convex objective functions subject to a feasible set constraint and coupled inequality constraints whose information is only partially accessible to each agent. For this problem, a distributed proximal-based algorithm, called distributed proximal primal-dual (DPPD) algorithm, is proposed based on the celebrated centralized proximal point algorithm. It is shown that the proposed algorithm can lead to the global optimal solution with a general stepsize, which is diminishing and non-summable, but not necessarily square-summable, and the saddle-point running evaluation error vanishes proportionally to $O(1/\sqrt{k})$, where $k>0$ is the iteration number. Finally, a simulation example is presented to corroborate the effectiveness of the proposed algorithm.
\end{abstract}

\begin{IEEEkeywords}
Distributed optimization, multi-agent networks, coupled inequality constraints, proximal point algorithm.
\end{IEEEkeywords}

\section{Introduction}\label{s1}

Distributed optimization has become an active research topic in recent years, mostly inspired by its numerous applications in machine learning, sensor networks, energy systems, and resource allocation \cite{bullo2009distributed}. Until now, a large number of algorithms have been developed, which, in general, can be classified into two categories: consensus-based algorithms and dual-decomposition-based algorithms. Generally speaking, a consensus-based algorithm is to directly integrate consensus theory into an optimization algorithm which only involves primal decision variables, and the distributed algorithms along this line subsume distributed subgradient \cite{nedic2009distributed}, distributed primal-dual subgradient algorithms \cite{zhu2012distributed}, distributed quasi-monotone subgradient algorithm \cite{liang2019distributed}, asynchronous distributed gradient \cite{xu2018convergence}, Newton-Raphson consensus \cite{zanella2011newton}, dual averaging \cite{duchi2012dual}, diffusion adaptation strategy \cite{chen2012diffusion}, fast distributed gradient \cite{jakovetic2014fast}, and stochastic mirror descent \cite{yuan2018optimal}. On the other hand, the dual-decomposition-based algorithms aim at handling the alignment of all local decision variables by equality constraints, through introducing corresponding dual variables, and typical algorithms include augmented Lagrangian method \cite{jakovetic2015linear}, distributed dual proximal gradient \cite{notarnicola2017asynchronous}, EXTRA \cite{shi2015extra}, and distributed forward-backward Bregman splitting \cite{xu2018bregman}.

It is well known that the proximal point algorithm (PPA) is one of important approaches capable of increasing the convergence rate to as fast as $O(1/k)$ for general convex functions \cite{guler1991convergence}, where $k$ is the iteration number. This thus inspires researchers to generalize PPA to distributed optimization problems. Proximal minimization is to add a penalty quadratic term to the original objective function, and can be viewed as an alternative to subgradient approaches. As pointed out in \cite{margellos2018distributed}, this is intriguing itself, because it establishes the relationship between proximal algorithms and gradient methods in the multi-agent scenario, which has been well developed for the case of a single agent \cite{bertsekas1989parallel}. Furthermore, in contrast to incremental algorithms, proximal minimization usually results in numerically more stable algorithms than their gradient-based counterparts \cite{bertsekas2011incremental}. Along this line, proximal minimization was incorporated into the ADMM algorithm to update local decision variables in \cite{aybat2018distributed}, where distributed composite convex optimization is studied under the assumption that the communication graph among agents is fixed and undirected. It was shown that the two proposed algorithms, i.e., deterministic and stochastic distributed proximal gradient algorithms, converge with rates $O(1/k)$ and $O(1/\sqrt{k})$, respectively. Proximal minimization was also employed in \cite{margellos2018distributed} for distributed optimization with feasible constraint sets in uncertain networks, where the convergence to some minimizer was established, but without providing results on the convergence speed. Distributed proximal gradient algorithms were also developed for tackling composite objective functions in \cite{hong2017stochastic,shi2015proximal}.

It should be noted that the aforementioned literature deals with distributed optimization problems under balanced communication graphs. For unbalanced interaction graphs, several approaches have been brought forward in the literature, including push-sum method \cite{nedic2015distributed,nedic2016stochastic,xi2017dextra}, weight balancing method \cite{makhdoumi2015graph}, ``surplus''-based method \cite{xi2017distributed}, row-stochastic matrix method \cite{xi2018linear,li2018distributed}, and epigraph method \cite{xie2018distributed}. Note that the studied problem in \cite{nedic2015distributed} (resp. \cite{nedic2016stochastic}) is for convex functions (resp. strongly convex functions with Lipschitz gradients) with rate $O(\ln k/\sqrt{k})$ (resp. $O((\ln k)/k)$) under time-varying graphs and without constraints, and \cite{xi2017dextra} addressed the case of restricted strongly convex functions and achieves a linear convergence rate under fixed graphs and without constraints. Among these methods, the essence of push-sum and weight balancing strategies is to introduce a scalar variable for each agent to counteract the imbalance of graphs; the ``surplus''-based idea is to introduce an additional column-stochastic matrix and a surplus variable for each agent to conquer the imbalance; the row-stochastic matrix approach generates a network-size variable for each agent to account for the imbalance; and the epigraph method aims to transform the original optimization problem into the epigraph form by introducing a network-size variable for each agent. However, all these methods have their shortcomings. To be specific, each agent needs to know its out-degree for the push-sum and weight-balancing methods; some sort of global information on eigenvalues of the communication graph is required in the ``surplus''-based method; and a network-size variable, which has extremely high dimension for large-scale networks, is stored, transmitted, and updated by each agent when using those methods in \cite{xie2018distributed,xi2018linear,li2018distributed}.

Inspired by the above observations, this paper investigates distributed convex optimization problems with a feasible set constraint and coupled inequality constraints under time-varying interaction graphs, where all involved functions are only assumed to be convex. For this problem, a distributed proximal-based algorithm is proposed, and the convergence analysis of the algorithm is also provided. The contributions of this paper can be summarized in the following aspects.

\begin{enumerate}
  \item A distributed algorithm, named distributed proximal primal-dual (DPPD) algorithm, is developed for the concerned problem and proved to be convergent to the optimizer set.
  \item The convergence rate is analyzed in the sense of the saddle-point running evaluation error, which is shown to decrease at the rate of $O(1/\sqrt{k})$, where $k>0$ is the iteration number.
  \item It is also worthwhile to note that the stepsize here is not necessarily to be square-summable as usually required in the literature (exceptions include \cite{liu2017convergence,qiu2018necessary,wang2018distributed}) by using the idea in \cite{liu2017convergence}, where a different algorithm is considered without inequality constraints.
\end{enumerate}

The remainder of this paper is organized as follows. Preliminaries as well as problem statement are provided in Section II. Section III provides the main results of this paper, and a simulation example is presented for validating the proposed algorithm in Section IV. Finally, the conclusion is drawn in Section V.

{\em Notations:} Denote by $\mathbb{R}_+^n$ the set of $n$-dimensional vectors with nonnegative components, and $[N]:=\{1,2,,\ldots,N\}$ the index set for an integer $N>0$. Let $col(z_1,\ldots,z_k)$ be the stacked column vector of $z_i\in\mathbb{R}^n,i\in [k]$. Let $\|\cdot\|$, $\|\cdot\|_1$, $x^\top$ and $\langle x,y\rangle$ stand for the standard Euclidean norm, $\ell_1$-norm, the transpose of a vector $x$ and the standard inner product of $x,y\in\mathbb{R}^n$, respectively. Denote by $P_X(z)$ the projection of a point $z\in\mathbb{R}^n$ onto the set $X\subset\mathbb{R}^n$, i.e., $P_X(z):=\mathop{\arg\min}_{x\in X}\|z-x\|$, and let $[z]_+$ be the component-wise projection of a vector $z\in\mathbb{R}^n$ onto $\mathbb{R}^n_+$. In addition, let $I$ be the identity matrix of compatible dimension, and $\otimes$ be the Kronecker product. And define $\|y\|_X$ to be the distance from a point $y$ to the set $X$, i.e., $\|y\|_X:=\inf_{x\in X}\|y-x\|$. Let $\lfloor c\rfloor$ be the largest integer less than or equal to a real number $c$.

\section{Preliminaries and Problem Statement}\label{s2}

\subsection{Convex Optimization}\label{s2.1}

Given a function $h:\mathbb{R}^n\to\mathbb{R}$, the {\em proximal operator} $\textbf{prox}_h:\mathbb{R}^n\to\mathbb{R}^n$ of $h$ is defined as
\begin{align}
\textbf{prox}_h(v)=\mathop{\arg\min}_{x\in\mathbb{R}^n}\big(h(x)+\frac{1}{2}\|x-v\|^2\big),      \label{c2}
\end{align}
which is assumed to be efficiently computable whenever employed throughout this paper. For example, $\textbf{prox}_h(v)=(I+P)^{-1}(v-q)$ for the quadratic function $h(x)=x^\top Px/2+q^\top x+r$ with $P$ being positive semi-definite and $\textbf{prox}_h(v)_i=(v_i+\sqrt{v_i^2+4})/2$ for logarithmic barrier $h(x)=-\sum_{i=1}^n\log x_i$, where $\textbf{prox}_h(v)_i$ means the $i$-th component of $\textbf{prox}_h(v)$.

We call a function $L:X\times\Lambda\to\mathbb{R}$ {\em convex-concave} if $L(\cdot,\lambda):X\to\mathbb{R}$ is convex for each $\lambda\in\Lambda$ and meanwhile $L(x,\cdot):\Lambda\to\mathbb{R}$ is concave for each $x\in X$, where $X\subset\mathbb{R}^n,\Lambda\subset\mathbb{R}^m$. A {\em saddle point} of the convex-concave function $L$ over $X\times\Lambda$ is defined to be a pair $(x^*,\lambda^*)$ such that
\begin{align}
L(x^*,\lambda)\leq L(x^*,\lambda^*)\leq L(x,\lambda^*),~~~~\forall x\in X,\lambda\in\Lambda.         \label{c3}
\end{align}

To seek the saddle point of the convex-concave function $L$ is usually called the {\em saddle-point problem}, {\em minimax problem} or {\em min-max problem}. Given the function $L(x,\lambda)$ for $x\in\mathbb{R}^n$ and $\lambda\in\mathbb{R}^m$, let $\partial_x L$ and $\partial_\lambda L$ be the subgradients with respect to $x$ and $\lambda$, respectively. For more details, please refer to \cite{bertsekas2003convex}.

\subsection{Problem Statement}\label{s2.2}

In this paper, we consider a network consisting of $N$ agents, which cooperatively solve the following minimization problem, i.e.,
\begin{align}
\min_{x\in X_0}~~f(x):=\sum_{i=1}^N f_i(x),~~~~\text{s.t.}~~g(x):=\sum_{i=1}^N g_i(x)\leq 0,                 \label{1}
\end{align}
where $f,g:\mathbb{R}^{n}\to\mathbb{R}$ are the global objective and constraint functions, respectively, $f_i:\mathbb{R}^{n}\to\mathbb{R}$ is the local objective function that is only accessible to agent $i$, and $x\in X_0\subset\mathbb{R}^n$ is the global decision variable. Also, $g_i:\mathbb{R}^n\to\mathbb{R}^{m}$ is only accessible to agent $i$, meaning that each agent has only access to partial information of the global inequality constraints. Note that all inequalities are understood componentwise throughout this paper.

\begin{remark}\label{r01}
Note that, akin to the separability of $f$ usually considered in distributed optimization, $g$ is also separable here in (\ref{1}). Nonetheless, the algorithm in this paper can be adapted to the case where the agents' decision variables are not aligned, i.e., $\min_{x\in X_0} \sum_{i=1}^N f_i(x_i)$, s.t. $\sum_{i=1}^N g_i(x_i)\leq 0$, where $x=col(x_1,\ldots,x_N)$. Moreover, coupled inequality constraints are encountered in a wide range of applications in such as power systems and plug-in electric vehicles charging problems, to name a few, and have been investigated intensively in recent years \cite{li2019distributed,chang2014distributed,falsone2017dual,notarnicola2017duality,mateos2017distributed}.
\end{remark}

In order to model the communications among agents, a digraph is introduced as $\mathcal{G}_k=(\mathcal{V},\mathcal{E}_k)$ at time instant $k$, with $\mathcal{V}=\{1,\ldots,N\}$ and $\mathcal{E}_k\subset\mathcal{V}\times\mathcal{V}$ being the node and edge sets at time step $k$, respectively. Let $(j,i)$ denote an edge in $\mathcal{E}_k$, meaning that node $i$ can receive information from node $j$ at time $k$, and in this case, we call $j$ (resp. $i$) an in-neighbor (resp. out-neighbor) of $i$ (resp. $j$). A graph is called strongly connected if any node can be connected to any other node by a directed path, where a directed path means a sequence of directed adjacent edges. Define the adjacency matrix $A_k=(a_{ij,k})\in\mathbb{R}^{N\times N}$ at time $k$ with $a_{ij,k}>0$ if $(j,i)\in\mathcal{E}_k$, and $a_{ij,k}=0$ otherwise.

To proceed further, several assumptions on the distributed optimization problem (\ref{1}) are imposed as follows.

\begin{assumption}[Communication and Connectivity]\label{a1}
For all $k\geq 0$,
\begin{enumerate}
  \item $a_{ii,k}\geq a$ for all $i\in[N]$, and $a_{ij,k}\geq a$ if $a_{ij,k}>0$, where $a$ is a constant in $(0,1)$;
  \item The matrix $A_k$ is double-stochastic, i.e., $\sum_{j=1}^N a_{ij,k}=1$ for all $i\in [N]$, and furthermore, $\sum_{i=1}^N a_{ij,t}=1$ for all $j\in[N]$;
  \item A constant $Q>0$ exists such that the union graph $(\mathcal{V},\cup_{l=0,\ldots,Q-1}\mathcal{E}_{k+l})$ is strongly connected for all $k\geq 0$.
\end{enumerate}
\end{assumption}

\begin{assumption}[Convexity and Compactness]\label{a2}~~~~~~~~~~~
\begin{enumerate}
  \item The functions $f_{i}$ and $g_{i}$ are convex for all $i\in[N]$.
  \item The set $X_0$ is closed, convex and compact.
\end{enumerate}
\end{assumption}

From Assumption \ref{a2}, one can apparently see that all functions are not required to be differentiable. In the meantime, in light of the compactness of $X_0$, which is of interest to many practical problems since variables in reality are always bounded, there must exist constants $D,E,S>0$ such that for all $x\in X_0$ and all $i\in[N]$
\begin{align}
&\|x\|\leq D,                                               \label{4}\\
&\|f_{i}(x)\|\leq E,~~~\|g_i(x)\|\leq E,                        \label{5}\\
&\|\partial f_{i}(x)\|\leq S,~~\|\partial g_{i}(x)\|\leq S.         \label{6}
\end{align}

\begin{assumption}[Slater Condition]\label{a3}
Consider problem (\ref{1}). There exists a point $\check{x}\in relint(X_0)$, where $relint(\cdot)$ means the relative interior of a set, such that $\sum_{i=1}^N g_i(\check{x})<0$.
\end{assumption}

A vector satisfying Slater condition is often called {\em slater vector}.

\section{Main Results}\label{s4}

\subsection{Distributed Algorithm and Convergence Analysis}\label{s4.1}

For problem (\ref{1}), the Lagrangian function is in the following form
\begin{align}
L(x,\mu)&=\sum_{i=1}^N f_i(x)+\mu^\top\sum_{i=1}^N g_i(x)=\sum_{i=1}^N L_i(x,\mu),          \label{4s1}
\end{align}
where $x\in\mathbb{R}^n$ is the global decision variable, $\mu\in\mathbb{R}^{m}$ is the dual variable or Lagrange multiplier associated with inequalities (\ref{1}), and
\begin{align}
L_i(x,\mu):=f_i(x)+\mu^\top g_i(x)        \label{zzz}
\end{align}
is the local Lagrangian function for $i\in[N]$.

By virtue of the proximal method, an algorithm, called distributed proximal primal-dual (DPPD) algorithm, is proposed, as given in Algorithm 1, where $\alpha_k$ is a nonincreasing stepsize, satisfying
\begin{align}
\alpha_k>0,~~\lim_{k\to\infty}\alpha_k=0,~~\sum_{k=1}^\infty \alpha_k=+\infty.           \label{ss3}
\end{align}

Furthermore, in (\ref{ag3-3}), $U$ is a bounded subset of $\mathbb{R}_+^{m}$ and supposed to contain the optimal dual set. At this stage, the set $U$ is just employed as a priori knowledge, whose computation is delayed to the next subsection.

It is easy to verify that (\ref{ag3-3}) is equivalent to
\begin{align}
\mu_{i,k+1}=P_U [\hat{\mu}_{i,k}+\alpha_k g_i(x_{i,k+1})].           \label{4s00}
\end{align}

\begin{algorithm}
 \caption{Distributed Proximal Primal-Dual (DPPD)}
 \begin{algorithmic}[1]
  \STATE \textbf{Initialization:} Stepsize $\alpha_k$ in (\ref{ss3}), and local initial conditions $x_{i,0}\in X_0$ and $\mu_{i,0}\in U$ for all $i\in[N]$.
  \STATE \textbf{Iterations:} Step $k\geq 0$: update for each $i\in[N]$:
\begin{align}
&\hat{x}_{i,k}=\sum_{j=1}^N a_{ij,k}x_{j,k},~~~~\hat{\mu}_{i,k}=\sum_{j=1}^N a_{ij,k}\mu_{j,k},                                \label{ag3-1}\\
&x_{i,k+1}=\mathop{\arg\min}_{x_i\in X_0}\Big(L_i(x_i,\hat{\mu}_{i,k})+\frac{1}{2\alpha_k}\|x_i-\hat{x}_{i,k}\|^2\Big),              \label{ag3-2}\\
&\hspace{-0.2cm}\mu_{i,k+1}=\mathop{\arg\max}_{\mu_i\in U}\Big(L_i(x_{i,k+1},\mu_i)-\frac{1}{2\alpha_k}\|\mu_i-\hat{\mu}_{i,k}\|^2\Big).              \label{ag3-3}
\end{align}
 \end{algorithmic}
\end{algorithm}

\begin{remark}\label{rrr}
Note that a distributed proximal-based algorithm has been proposed in \cite{margellos2018distributed} for handling distributed optimization problems with feasible set constraints, but no coupled inequality constraints are addressed and meanwhile it is considered without the analysis on the convergence rate.
\end{remark}

To proceed, let us first present some preliminary results on consensus of local variables $x_{i,k}$'s and $\mu_{i,k}$'s for $i\in[N]$.

\begin{lemma}\label{l2}
If Assumptions \ref{a1}-\ref{a2} hold with $\alpha_k$ given in (\ref{ss3}), then there hold
\begin{align}
\|x_{i,k}-\bar{x}_k\|&=O(\alpha_{\lfloor\frac{k}{2}\rfloor}),                \label{ass15}\\
\|\mu_{i,k}-\bar{\mu}_{k}\|&=O(\alpha_{\lfloor\frac{k}{2}\rfloor}),~~~\forall i,j\in[N]         \label{ass0}
\end{align}
where $\bar{x}_k:=\frac{1}{N}\sum_{i=1}^N x_{i,k}$ and $\bar{\mu}_k:=\frac{1}{N}\sum_{i=1}^N \mu_{i,k}$.
\end{lemma}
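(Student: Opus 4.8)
The plan is to establish the two consensus estimates \eqref{ass15} and \eqref{ass0} by viewing the updates \eqref{ag3-1}--\eqref{ag3-3} as a perturbed average-consensus iteration and then quantifying the perturbation using the boundedness estimates \eqref{4}--\eqref{6}. First I would note that, stacking the local variables, the primal update can be written as $x_{i,k+1} = \sum_{j=1}^N a_{ij,k} x_{j,k} + \epsilon_{i,k}$, where $\epsilon_{i,k} := x_{i,k+1} - \hat{x}_{i,k}$ is the ``proximal correction''. The crucial first step is to bound $\|\epsilon_{i,k}\|$: since $x_{i,k+1}$ minimizes $L_i(\cdot,\hat\mu_{i,k}) + \frac{1}{2\alpha_k}\|\cdot - \hat x_{i,k}\|^2$ over $X_0$, the optimality condition (or a direct comparison of the objective value at $x_{i,k+1}$ with that at $\hat x_{i,k}$, if $\hat x_{i,k}\in X_0$, which holds because $X_0$ is convex and all $x_{j,k}\in X_0$) yields $\frac{1}{2\alpha_k}\|\epsilon_{i,k}\|^2 \le L_i(\hat x_{i,k},\hat\mu_{i,k}) - L_i(x_{i,k+1},\hat\mu_{i,k}) \le \langle \partial_x L_i(x_{i,k+1},\hat\mu_{i,k}), \hat x_{i,k} - x_{i,k+1}\rangle$, and since $\hat\mu_{i,k}\in U$ is bounded (say $\|\mu\|\le R_U$ on $U$) and $\|\partial f_i\|,\|\partial g_i\|\le S$ by \eqref{6}, the subgradient of $L_i$ in $x$ is bounded by $S(1+R_U)$; hence $\|\epsilon_{i,k}\| = O(\alpha_k)$. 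The same argument applied to the dual update \eqref{4s00}, using nonexpansiveness of $P_U$ and $\|g_i(x_{i,k+1})\|\le E$ from \eqref{5}, gives $\|\mu_{i,k+1} - \hat\mu_{i,k}\| \le \alpha_k \|g_i(x_{i,k+1})\| = O(\alpha_k)$, so the dual iteration is likewise average-consensus perturbed by an $O(\alpha_k)$ term.

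Next, I would invoke the standard linear-rate contraction for products of doubly-stochastic matrices under Assumptions \ref{a1}(1)--(3): there exist $C>0$ and $\rho\in(0,1)$ such that $\|\Phi(k,s) - \frac{1}{N}\mathbf{1}\mathbf{1}^\top\| \le C\rho^{k-s}$, where $\Phi(k,s) := A_{k-1}\cdots A_s$ is the transition matrix. Writing the solution of the perturbed recursion explicitly, the deviation of $x_{i,k}$ from the average $\bar x_k$ is a sum of terms of the form $\Phi(k,s)_{ij} - \frac{1}{N}$ multiplied by the perturbations $\epsilon_{j,s}$ (the initialization term decays like $\rho^k$ and is negligible). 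Since $\|\epsilon_{j,s}\| = O(\alpha_s)$ and $\alpha_s$ is nonincreasing, one obtains $\|x_{i,k} - \bar x_k\| = O\big(\sum_{s=0}^{k-1}\rho^{k-1-s}\alpha_s\big)$. The remaining routine step is to show this convolution sum is $O(\alpha_{\lfloor k/2\rfloor})$: split the sum at $s = \lfloor k/2\rfloor$; for $s \le \lfloor k/2\rfloor$ the geometric factor $\rho^{k-1-s}$ is at most $\rho^{k/2 - 1}$ which decays faster than any $\alpha$ sequence satisfying only \eqref{ss3} (here one uses that $\alpha_{\lfloor k/2\rfloor}$ is only diminishing, so one should bound $\sum_{s<k/2}\rho^{k-1-s}\alpha_s \le \alpha_0 \sum_{s<k/2}\rho^{k-1-s} = O(\rho^{k/2})$, which is $o(\alpha_{\lfloor k/2\rfloor})$ only if $\alpha_k$ does not decay super-geometrically — this is implicitly assumed, or one absorbs it since $\rho^{k/2}$ is summable hence eventually dominated); for $s > \lfloor k/2\rfloor$ one has $\alpha_s \le \alpha_{\lfloor k/2\rfloor}$ by monotonicity, so $\sum_{s \ge k/2}\rho^{k-1-s}\alpha_s \le \alpha_{\lfloor k/2\rfloor}\sum_{s}\rho^{k-1-s} \le \frac{\alpha_{\lfloor k/2\rfloor}}{1-\rho}$. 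Combining, $\|x_{i,k}-\bar x_k\| = O(\alpha_{\lfloor k/2\rfloor})$, and the identical argument delivers \eqref{ass0}.

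The main obstacle is the first step: obtaining the $O(\alpha_k)$ bound on the proximal correction $\epsilon_{i,k}$ rigorously. One must be careful that the argmin in \eqref{ag3-2} is taken over the constrained set $X_0$, not over all of $\mathbb{R}^n$, so the clean identity $\textbf{prox}$ gives does not directly apply; the comparison-of-objective-values trick works because the feasible competitor $\hat x_{i,k}$ lies in $X_0$ (convex combination of feasible points) and because $L_i(\cdot,\hat\mu_{i,k})$ is convex with bounded subgradients on $X_0$ by \eqref{6}. A secondary subtlety is that this bound requires a uniform bound on $\|\hat\mu_{i,k}\|$, which is immediate since every $\mu_{i,k}\in U$ and $U$ is bounded, and $\hat\mu_{i,k}$ is a convex combination of such points by Assumption \ref{a1}(2); similarly all $x_{i,k}$ remain in $X_0$ by induction (each new iterate is an explicit projection-type update onto $X_0$ via the argmin), so \eqref{4}--\eqref{6} apply throughout. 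Once these uniform bounds and the $O(\alpha_k)$ perturbation estimate are in place, the rest is the standard perturbed-consensus machinery described above.
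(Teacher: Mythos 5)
Your proposal follows essentially the same route as the paper's proof: bound the proximal correction $\|x_{i,k+1}-\hat{x}_{i,k}\|\le S(1+U_0)\alpha_k$ (and $\|\mu_{i,k+1}-\hat{\mu}_{i,k}\|\le E\alpha_k$ via nonexpansiveness of $P_U$), then view the iterates as a perturbed average-consensus recursion and apply the geometric mixing of doubly stochastic products together with the split-at-$\lfloor k/2\rfloor$ convolution estimate, a step the paper simply delegates to Lemmas 3 and 4 of \cite{xie2018distributed}. One minor fix: in your chain for the primal correction, the convexity upper bound on $L_i(\hat{x}_{i,k},\hat{\mu}_{i,k})-L_i(x_{i,k+1},\hat{\mu}_{i,k})$ should use the subgradient evaluated at $\hat{x}_{i,k}$ (or simply the $S(1+U_0)$-Lipschitz continuity of $L_i(\cdot,\hat{\mu}_{i,k})$ on $X_0$) rather than at $x_{i,k+1}$, exactly as the paper does; the conclusion is unaffected.
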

\begin{proof}
The proof can be found in the Appendix A.
\end{proof}

To facilitate the subsequent analysis, we denote by $X^*$ and $U^*$ the optimal primal and dual variable sets, respectively, and let $f^*$ be the optimal value of the cost function $f$ corresponding to the optimal sets $X^*$ and $U^*$. Then, it is easy to observe that $L(x^*,\mu^*)=f^*$ for any point $x^*\in X^*,\mu^*\in U^*$ since $(\mu^*)^\top\sum_{i=1}^N g_i(x^*)=0$ by first-order optimality conditions. Note that each optimal variable in $X^*$ obviously satisfies the constraints in (\ref{1}). Moreover, similar to \cite{mateos2017distributed}, the {\em running evaluation error} for measuring the convergence rate to the optimal value is defined as
\begin{align}
\Big|\frac{\sum_{l=1}^k L(\bar{x}_{l+1},\bar{\mu}_{l+1})}{k}-f^*\Big|,        \label{ee}
\end{align}
where $\bar{x}_k$ and $\bar{\mu}_k$ are defined in Lemma \ref{l2}.

We are now in a position to present the main results.

\begin{theorem}\label{t3}
If Assumptions \ref{a1}-\ref{a3} are satisfied with $\alpha_k$ given in (\ref{ss3}), then under Algorithm 1, all $x_{i,k}$'s will reach a common point in $X^*$ and meanwhile $\mu_{i,k}$'s will reach a common point in $U^*$.

Moreover, the following holds for the running evaluation error:
\begin{align}
\Big|\frac{\sum_{l=1}^k L(\bar{x}_{l+1},\bar{\mu}_{l+1})}{k}-f^*\Big|=O(1/\sqrt{k}),            \label{4s2}
\end{align}
when setting $\alpha_k=1/\sqrt{k}$ for $k\geq 1$.
\end{theorem}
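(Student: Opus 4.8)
The plan is to split the theorem into two parts: (i) establishing convergence of all local variables to a common optimal primal/dual point, and (ii) deriving the $O(1/\sqrt{k})$ decay of the running evaluation error. Both parts will rest on a single fundamental per-step inequality, so my first move is to derive that inequality. Starting from the update rules \eqref{ag3-2}--\eqref{ag3-3}, I would use the standard property of the proximal/projection steps: for the $x$-update, optimality of $x_{i,k+1}$ over the convex set $X_0$ yields, for any $x\in X_0$,
\begin{align}
\|x_{i,k+1}-x\|^2 \le \|\hat{x}_{i,k}-x\|^2 - 2\alpha_k\big(L_i(x_{i,k+1},\hat\mu_{i,k})-L_i(x,\hat\mu_{i,k})\big) + \text{(error terms)},  \nonumber
\end{align}
and symmetrically for the $\mu$-update against any $\mu\in U$. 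Summing over $i\in[N]$, using double-stochasticity of $A_k$ (so that $\sum_i\|\hat{x}_{i,k}-x\|^2\le\sum_i\|x_{i,k}-x\|^2$ by convexity of the squared norm and row-stochasticity, with column-stochasticity preserving the average), and absorbing the $g_i$-crossterms through the bilinear structure of $L_i$, I expect to arrive at a recursion of the form
\begin{align}
\sum_{i=1}^N\big(\|x_{i,k+1}-x\|^2+\|\mu_{i,k+1}-\mu\|^2\big) \le \sum_{i=1}^N\big(\|x_{i,k}-x\|^2+\|\mu_{i,k}-\mu\|^2\big) - 2\alpha_k\,\Delta_k + C\alpha_k^2 + C\alpha_k\,\epsilon_k,  \nonumber
\end{align}
where $\Delta_k$ is a saddle-point gap term evaluated at $(\bar x_{k+1},\bar\mu_{k+1})$ relative to $(x,\mu)$, the $C\alpha_k^2$ comes from the bounded subgradients \eqref{6} and bounded sets \eqref{4}--\eqref{5}, and $\epsilon_k=O(\alpha_{\lfloor k/2\rfloor})$ collects the consensus errors from Lemma \ref{l2} (these arise when replacing $\hat x_{i,k},\hat\mu_{i,k}$ and the individual $L_i$ evaluations by their averages $\bar x_k,\bar\mu_k$).

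For part (i), I would instantiate $(x,\mu)=(x^*,\mu^*)$ a saddle point of $L$; then $\Delta_k\ge 0$ by the saddle-point inequality \eqref{c3} together with the fact that $L=\sum_i L_i$ and $\sum_i g_i$ appears linearly. The recursion then shows $\sum_i(\|x_{i,k}-x^*\|^2+\|\mu_{i,k}-\mu^*\|^2)$ is almost-decreasing with summable-after-scaling perturbations; since $\sum_k\alpha_k^2$ may diverge I cannot use the classical Robbins–Siegmund argument directly, but because $\alpha_k\to0$ and $\sum_k\alpha_k=\infty$ while $\sum_k\alpha_k\Delta_k<\infty$ would follow from boundedness, I would instead argue: the iterates stay bounded, $\liminf_k\Delta_k=0$, and combined with Lemma \ref{l2} (which forces $x_{i,k}\to\bar x_k$, $\mu_{i,k}\to\bar\mu_k$) a subsequence of $(\bar x_k,\bar\mu_k)$ converges to some saddle point; a standard telescoping/contraction argument along that subsequence then upgrades $\liminf$ to $\lim$, giving convergence of every $x_{i,k}$ to a common point of $X^*$ and every $\mu_{i,k}$ to a common point of $U^*$. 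Here the role of $U$ containing $U^*$ (stated before the theorem) and Assumption \ref{a3} guaranteeing $U^*$ is nonempty and bounded is essential.

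For part (ii), set $\alpha_k=1/\sqrt{k}$, fix the comparison point at a saddle point, telescope the recursion from $l=1$ to $k$, and drop the nonnegative $\sum_i\|\cdot\|^2$ terms on the left. Using $\sum_{l=1}^k\alpha_l\ge c\sqrt{k}$, $\sum_{l=1}^k\alpha_l^2=O(\ln k)$... — actually here I must be careful: $\sum\alpha_l^2=O(\ln k)$ would only give $O(\ln k/\sqrt k)$, so to recover the clean $O(1/\sqrt k)$ I would exploit the convexity of $L$ in $x$ / concavity in $\mu$ via Jensen to pass from $\sum_l \Delta_l$ to a single evaluation at the running average, and crucially bound the bilinear cross-terms $\mu^\top\sum_i g_i$ using the boundedness of $U$ rather than through $\alpha_l^2$; the consensus-error sum $\sum_l\alpha_l\epsilon_l=\sum_l O(\alpha_l\alpha_{\lfloor l/2\rfloor})$ is $O(\sqrt k)$ so after dividing by $\sum_l\alpha_l\ge c\sqrt k$ it contributes $O(1)$ — wait, that is too weak. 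The resolution, and what I anticipate as \emph{the main obstacle}, is the bookkeeping that shows the dominant $O(\sqrt k)$ contributions on the right-hand side are exactly matched by the $\Theta(\sqrt k)$ growth of $\sum_l\alpha_l$, and that the genuinely error-like terms (consensus gaps, $\alpha_l^2$ terms) sum to $o(\sqrt k)\cdot(\text{something})$ or are handled by a sharper estimate of $\sum_l \alpha_l\alpha_{\lfloor l/2\rfloor}=O(\ln k)$ (since $\alpha_l\alpha_{\lfloor l/2\rfloor}\le \sqrt 2/l$), yielding after division by $\Theta(\sqrt k)$ a net $O(1/\sqrt k)$ bound on $\big|\frac1k\sum_{l=1}^k L(\bar x_{l+1},\bar\mu_{l+1})-f^*\big|$; establishing both the upper and the matching lower side of \eqref{ee} requires running the argument with the two halves of the saddle-point inequality separately, plus a feasibility-type estimate on $[\,\sum_i g_i(\bar x_{l+1})\,]_+$ controlled via the Slater point $\check x$ of Assumption \ref{a3} (the classical Nedić–Ozdaglar bound on multiplier norms). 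Carefully tracking these constants through the double-stochastic averaging is the delicate part; everything else is routine convex-analysis manipulation.
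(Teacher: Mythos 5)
Your overall architecture (consensus lemma plus a per-step primal--dual inequality, with upper and lower bounds obtained from the two halves of the saddle-point inequality) points in the right direction, but both halves of your argument have genuine gaps. For part (i): with a merely non-summable stepsize, the perturbation terms $C\alpha_k^2+C\alpha_k\epsilon_k$ in your recursion are themselves non-summable, so your assertion that ``$\sum_k\alpha_k\Delta_k<\infty$ would follow from boundedness'' is false, and the ``standard telescoping/contraction argument'' you invoke to upgrade $\liminf$ to $\lim$ is exactly what breaks: quasi-Fej\'er monotonicity with respect to a fixed saddle point is lost. The paper's way out is to (a) anchor the recursion at the projections $P_{X^*}(\cdot),P_{U^*}(\cdot)$, i.e.\ to track the squared \emph{distance to the optimal sets} $\phi_k$, so the gap term becomes $L(P_{X^*}(\bar{x}_k),\bar{\mu}_k)-L(\bar{x}_k,P_{U^*}(\bar{\mu}_k))\leq 0$, and (b) absorb the squared error via $\alpha_{\lfloor k/2\rfloor}^2\leq\epsilon\,\alpha_{\lfloor k/2\rfloor}$ for $k\geq k_\epsilon$, which puts the recursion in the form (\ref{ss21}) of (22) in \cite{liu2017convergence}; the non-square-summable case is then handled by the argument of that reference, which is precisely the ingredient your sketch leaves out. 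Moreover, even granting $\liminf_k\Delta_k=0$, a subsequential limit $\hat{x}$ satisfying $L(\hat{x},\mu^*)=f^*$ for a \emph{fixed} $\mu^*$ need not lie in $X^*$ (the minimizer set of $L(\cdot,\mu^*)$ over $X_0$ can strictly contain $X^*$, e.g.\ in linear programs), so your claim that the subsequence converges to ``some saddle point'' does not follow from your fixed-anchor gap alone.

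For part (ii), your plan as written does not produce the stated $O(1/\sqrt{k})$ for the \emph{unweighted} running average in (\ref{ee}): multiplying the gap by $\alpha_l$ and dividing by $\sum_l\alpha_l$ yields a weighted ergodic bound, and the residues you track ($\sum_l\alpha_l^2=O(\ln k)$, $\sum_l\alpha_l\alpha_{\lfloor l/2\rfloor}=O(\ln k)$, or the $O(1)$ term you yourself flag) leave at best $O(\ln k/\sqrt{k})$. The missing idea is that the rate proof should not reuse the $\phi_k$ recursion at all: since the subproblems (\ref{ag3-2}) and (\ref{ag3-3}) are $1/\alpha_k$-strongly convex (resp.\ concave), one gets per-step bounds of the form (\ref{ss26}), $L(\bar{x}_{l+1},\bar{\mu}_{l+1})-f^*\leq\frac{1}{2\alpha_l}\big(\|x_l-\underline{x}^*\|^2-\|x_{l+1}-\underline{x}^*\|^2\big)+c_2\alpha_{\lfloor l/2\rfloor}$, and the mirror lower bound with $\mu^*$, in which the consensus errors enter \emph{additively} (not scaled by $\alpha_l$). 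Telescoping with the nondecreasing weights $1/\alpha_l$ and $\|x_l-\underline{x}^*\|^2\leq 4ND^2$, $\|\mu_l-\underline{\mu}^*\|^2\leq 4NU_0^2$ gives $\big|\frac{1}{k}\sum_{l=1}^k L(\bar{x}_{l+1},\bar{\mu}_{l+1})-f^*\big|\leq\frac{2N\max\{D^2,U_0^2\}}{k\alpha_k}+\frac{c\sum_{l=1}^k\alpha_{\lfloor l/2\rfloor}}{k}$, which is $O(1/\sqrt{k})$ for $\alpha_k=1/\sqrt{k}$ because $\sum_{l=1}^k\alpha_{\lfloor l/2\rfloor}=O(\sqrt{k})$. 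Finally, the Slater-based feasibility estimate on $[\sum_i g_i(\bar{x}_{l+1})]_+$ you propose is not needed for the rate: Slater's condition is used only to construct the bounded set $U$ (Section III-B), while the lower half of (\ref{4s2}) comes from $L(\bar{x}_{k+1},\mu^*)\geq f^*$ combined with the strong concavity of the dual proximal step.
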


\begin{proof}
The proof can be found in the Appendix B.
\end{proof}

\begin{remark}\label{r3}
The proposed algorithm, described in Algorithm 1, has two main features: 1) it employs a general stepsize, i.e., satisfying (\ref{ss3}), not necessarily square-summable; and 2) it converges with rate $O(1/\sqrt{k})$ in the sense of running evaluation error. Note that the stepsize's square-summability is not required as well in \cite{liu2017convergence}, but a different algorithm, i.e., projected subgradient algorithm, was studied in \cite{liu2017convergence} without inequality constraints. In contrast, a proximal-based algorithm is developed here with coupled inequality constraints. Moreover, our results are advantageous in contrast with \cite{chang2014distributed} where local objective functions are assumed to be continuously differentiable, the global objective function is supposed to have Lipschitz continuous gradients, and no convergence rate is reported. In \cite{falsone2017dual}, the convergence is given in the ergodic sense, and no convergence speed is given, although non-identical feasible sets are discussed. Fixed and undirected graphs are considered in \cite{notarnicola2017duality}, and no convergence speed is given either. It should be also noted that the algorithms in \cite{chang2014distributed,falsone2017dual,notarnicola2017duality} are intrinsically different from DPPD in this paper. Additionally, the proximal minimizations in Algorithm 1 do not need the computation of (sub)gradients, which is often the case for most of other algorithms, and thus Algorithm 1 is more computationally tractable.
\end{remark}

\begin{remark}\label{r1v1}
It should be noted that the result can be generalized to handle the case when each agent has its own local decision variable in problem (\ref{1}). Moreover, it is worthwhile to point out that Algorithm 1 can be easily extended to solve two saddle-point problems. Specifically, the first is $\min_{x\in X}\max_{y\in Y} H(x,y):=\sum_{i=1}^N H_i(x,y_i)$, where $H$ is the global convex-concave objective function, $H_i(x,y_i)$ is the local convex-concave objective function defined over $X\times Y_i$ that is only accessible to agent $i$, $x,y_i$ are the global and local decision variables, respectively, $y:=col(y_1,\ldots,y_N)$, $Y:=Y_1\times\cdots\times Y_N$, and $X\subset\mathbb{R}^n,Y_i\subset\mathbb{R}^{m_i}$ are some sets. Note that $X$ is a common information for all agents and $Y_i$ is locally known to agent $i$. The second is $\min_{x\in \mathcal{X}}\max_{y\in \mathcal{Y}} \Xi(x,y):=\sum_{i=1}^N \Xi_i(x,y)$, where $\mathcal{X}\subset\mathbb{R}^n$ and $\mathcal{Y}\subset\mathbb{R}^{m}$ are nonempty sets and commonly known by all agents, $x,y$ are the global decision variables, $\Xi$ is the global convex-concave objective function, and $\Xi_i(x,y)$ is the local convex-concave objective function defined over $\mathcal{X}\times \mathcal{Y}$ that is only accessible to agent $i$. The details are omitted due to limited space.
\end{remark}

\subsection{Bound on Optimal Dual Set}\label{s4.2}

In the last subsection, the bounded set $U$ has been exploited for studying dual variables $\mu_{i,k}$, which has also been employed in \cite{nedic2009approximate,chang2014distributed,mateos2017distributed}. In this subsection a distributed strategy is developed to obtain the set $U$.

Let us first establish a bound on the dual variable $\mu$ corresponding to inequality constraints $\sum_{i=1}^N g_i(x)\leq 0$. To this end, let $q(\mu)$ denote the dual function defined as
\begin{align}
q(\mu)&:=\inf_{x\in X_0} \sum_{i=1}^N f_i(x)+\mu^\top\sum_{i=1}^N g_i(x)          \nonumber\\
&\geq \sum_{i=1}^N \big(\inf_{x\in X_0} f_i(x)+\mu^\top g_i(x)\big)=:\sum_{i=1}^N q_i(\mu).             \label{4s9}
\end{align}
Now, invoking Lemma 1 in \cite{nedic2009approximate}, it can be asserted that
\begin{align}
\max_{\mu^*\in U^*}\|\mu^*\|&\leq \frac{f(\check{x})-q(\check{\mu})}{\gamma}            \nonumber\\
&\leq \frac{N[\max_{i\in[N]}f_i(\check{x})-\min_{i\in[N]}q_i(\check{\mu})]}{\gamma}=:U_c,          \label{4s10}
\end{align}
where $\check{x}$ is a slater vector and $\check{\mu}\in\mathbb{R}_+^m$ is any vector. Moreover,
\begin{align}
\gamma:=\min_{l\in[m]}\Big\{-\sum_{i=1}^N g_{il}(\check{x})\Big\},        \label{4s11}
\end{align}
where $g_{il}$ is the $l$-th component of $g_i$. As a consequence, the optimal set $U^*$ is contained in $\Upsilon$, defined as
\begin{align}
\Upsilon:=\Big\{\mu\in\mathbb{R}_+^m:\|\mu\|\leq U_c\Big\}.        \label{4s12}
\end{align}

In what follows, a distributed method, inspired by \cite{mateos2017distributed}, is proposed for each agent to obtain the set $U$ employed in the last subsection such that $\Upsilon\subset U$. It includes three steps.

{\em Step 1:} Each agent finds the slater vector $\check{x}$ by solving
\begin{align}
\min_{x\in X_0}~~\sum_{i=1}^N g_i(x),                 \label{4s13}
\end{align}
using the distributed algorithm
\begin{align}
x_{i,k+1}=\mathop{\arg\min}_{x_i\in X_0}\Big(g_i(x_i)+\frac{1}{2\alpha_k}\|x_i-\hat{x}_{i,k}\|^2\Big),                 \label{4s14}
\end{align}
with any initial condition $x_{i,0}\in X_0$. Note that (\ref{4s14}) can be considered as a special case of Algorithm 1 when $g_i\equiv 0$ for all $i\in[N]$. Therefore, Theorem \ref{t3} holds for (\ref{4s14}), and the final convergent point, say $\check{x}$, must satisfy $\sum_{i=1}^N g_i(\check{x})<0$ due to Slater condition. In addition, each agent can independently compute $q_i(\mu)=\inf_{x\in X_0} f_i(x)+\mu^\top g_i(x)$, and the common point $\check{\mu}\in\mathbb{R}_+^m$ in (\ref{4s10}) can be selected as any point agreed upon by all agents in advance, such as $\check{\mu}=0_m$.

{\em Step 2:} All agents seek a common lower bound on $\gamma$ in (\ref{4s11}). To this end, each agent needs to ensure $g_{i}$ to be negative through a consensus algorithm, and simultaneously finds a maximum of all $g_i$'s by another finite-time consensus algorithm.

Specifically, setting $z_{i,0}=g_i(\check{x})$, which may be nonnegative, and $s_{i,0}=z_{i,0}$ for $i\in[N]$. Each agent updates its variables $z_{i,k}$ and $s_{i,k}$ for $k\geq 0$ by
\begin{align}
z_{i,k+1}&=\sum_{j=1}^N a_{ij,k}z_{j,k},                 \label{4s15}\\
s_{i,k+1}&=\max \{s_{j,k}:j\in\mathcal{N}_{i,k}^+\}.        \label{4s17}
\end{align}
(\ref{4s15}) can be written in a compact form
\begin{align}
z_{k+1}=(A_k\otimes I_m) z_{k},                 \label{4s16}
\end{align}
where $z_k:=col(z_{1,k},\ldots,z_{N,k})$, and the $(i,j)$-th entry of $A_k$ is $a_{ij,k}$ for $i,j\in[N]$.

Updating equation (\ref{4s17}) will achieve consensus with the ultimate value being $\max_{i\in[N]}s_{i,0}=\max_{i\in[N]}z_{i,0}\in\mathbb{R}^m$ in finite iterations no greater than $(N-1)Q$. By setting $\varsigma=(N-1)Q$, at time step $\varsigma$, one then has $s_{i,\varsigma}=\max_{i\in[N]}z_{i,0}$ for all $i\in[N]$. If $sign(s_{i,\varsigma})\leq -1_m\otimes \tau$, then stop the iterations on $s_{i,k}$'s, where $\tau$ is any pre-specified constant in $(0,1)$ and $sign(\cdot)$ is the standard signum function, componentwise for a vector; otherwise, each agent $i$ re-initializes $s_{i,0}=z_{i,\varsigma}$ and updates $s_{i,k}$ for $\varsigma$ iterations according to (\ref{4s17}) again. Similarly, it can be obtained that $s_{i,\varsigma}=\max_{i\in[N]}z_{i,\varsigma}$. At this time, we stop updates of $s_{i,k}$'s if $sign(s_{i,\varsigma})\leq -1_m\otimes \tau$, and otherwise each agent $i$ re-initializes $s_{i,0}=z_{i,2\varsigma}$ and updates $s_{i,k}$ for $\varsigma$ iterations again according to (\ref{4s17}). Repeating this process, it can be asserted that the process will be eventually terminated in a finite time, as shown in the following result, and the final consensus value on $s_{i,k}$'s is denoted as $\check{z}\in\mathbb{R}^m$, satisfying $sign(\check{z})\leq -1_m\otimes \tau$.

\begin{lemma}\label{l3}
For dynamics (\ref{4s15}), there exists a finite constant $k^*>0$ such that $sign(z_{i,k})\leq -1_m\otimes \tau$ for all $i\in[N]$ and all $k\geq k^*$.
\end{lemma}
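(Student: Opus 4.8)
The plan is to recognize that (\ref{4s15}) is an average-consensus recursion driven by the doubly-stochastic matrices $A_k$, so that the component-wise mean of the $z_{i,k}$'s is a conserved quantity which, by the Slater condition, is strictly negative; one then invokes the consensus convergence already available in the analysis to conclude that every $z_{i,k}$ enters the strictly negative orthant after finitely many steps, which is exactly the claimed bound since $\tau\in(0,1)$ forces $sign(c)\le-\tau$ to be equivalent to $c<0$.

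First I would establish invariance of the average. Summing (\ref{4s15}) over $i\in[N]$ and using the column-stochasticity $\sum_{i=1}^N a_{ij,k}=1$ from Assumption \ref{a1}, one gets $\sum_{i=1}^N z_{i,k+1}=\sum_{i=1}^N z_{i,k}$ for every $k\ge 0$; hence $\bar z_k:=\frac1N\sum_{i=1}^N z_{i,k}$ equals its initial value $\bar z:=\frac1N\sum_{i=1}^N g_i(\check x)$ for all $k$. By Assumption \ref{a3} (Slater condition), $\sum_{i=1}^N g_i(\check x)<0$ component-wise, so the scalar $\delta:=\min_{l\in[m]}(-\bar z_l)$ is strictly positive and $\bar z_l\le-\delta$ for every $l\in[m]$.

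Next I would invoke consensus. Writing (\ref{4s15}) in the compact form $z_{k+1}=(A_k\otimes I_m)z_k$ with $z_k:=col(z_{1,k},\ldots,z_{N,k})$, and using Assumption \ref{a1} (uniform lower bound $a$ on the positive weights, double-stochasticity, and $Q$-step joint strong connectivity), the backward products $A_kA_{k-1}\cdots A_s$ converge geometrically, as $k-s\to\infty$, to the rank-one averaging matrix whose entries all equal $1/N$ --- this is precisely the consensus estimate that underlies the proof of Lemma \ref{l2} in Appendix A. Applying it here yields $z_{i,k}\to\bar z$ as $k\to\infty$, uniformly over the finitely many indices $i\in[N]$. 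Hence there exists a finite $k^*>0$ such that $\|z_{i,k}-\bar z\|\le\delta/2$ for all $i\in[N]$ and all $k\ge k^*$.

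Finally I would convert this into the sign bound: for any $k\ge k^*$, $i\in[N]$ and $l\in[m]$, $z_{il,k}\le\bar z_l+\|z_{i,k}-\bar z\|\le-\delta+\delta/2=-\delta/2<0$, so $sign(z_{il,k})=-1\le-\tau$; stacking over the components gives $sign(z_{i,k})\le-1_m\otimes\tau$ for all $i\in[N]$ and all $k\ge k^*$, as claimed. The argument is essentially routine: the only substantive ingredient is the geometric consensus rate of the doubly-stochastic product under Assumption \ref{a1}, which is already in hand, so I anticipate the only mild care needed is in making the convergence uniform in $i$ (immediate, since $N$ is finite) and in translating strict negativity of each coordinate into the $sign(\cdot)\le-\tau$ statement.
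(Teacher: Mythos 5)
Your proposal is correct and follows essentially the same route as the paper: conservation of the component-wise sum via double-stochasticity, geometric consensus of $z_{i,k}$ to the (strictly negative, by Slater) average $\frac{1}{N}\sum_{j=1}^N g_j(\check{x})$, and hence entry into the negative orthant in finite time, which yields $sign(z_{i,k})\leq -1_m\otimes\tau$. The only difference is presentational — you spell out the $\delta/2$ margin and the equivalence of strict negativity with the sign bound, whereas the paper cites Lemma 1 of \cite{nedic2015distributed} for the exponential consensus step.
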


\begin{proof}
According to Lemma 1 in \cite{nedic2015distributed} without perturbations or errors, it can be concluded that $z_{i,k+1}\to \sum_{j=1}^N z_{j,k}/N$ exponentially. Due to the double-stochastic property of $D_k$, left-multiplying $1_{N}^\top\otimes I_m$ on both sides of (\ref{4s16}) implies that $\sum_{i=1}^N z_{i,k+1}=\sum_{i=1}^N z_{i,k}$ for all $k\geq 0$, thus giving rise to that $\sum_{i=1}^N z_{i,k}=\sum_{i=1}^N z_{i,0}$ for all $k\geq 0$. Combining the above facts, one can obtain that $z_{i,k}\to \sum_{j=1}^N z_{j,0}/N$ exponentially for each $i\in[N]$. By combining $z_{i,k}\to \omega_{i,\infty}\sum_{j=1}^N z_{j,0}/N$ exponentially with $\sum_{j=1}^N z_{j,0}=\sum_{j=1}^N g_j(\check{x})<0$, one can obtain that there must exist a finite time $k^*$ such that $sign(z_{i,k})\leq -1_m\otimes \tau$ for all $i\in[N]$ and $k\geq k^*$.
\end{proof}

Then, each agent can compute the same lower bound on $\gamma$ as follows
\begin{align}
\gamma&=\min_{l\in[m]}\Big\{-\sum_{i=1}^N g_{il}(\check{x})\Big\}\geq \min_{l\in[m]}\{-N\check{z}_l\}=:\underline{\gamma}.                 \label{4s18}
\end{align}

{\em Step 3:} All agents can reach agreement on $\max_{i\in[N]}f_i(\check{x})$ and $\min_{i\in[N]}q_i(\check{\mu})$ in finite iterations by executing the analogous algorithms to (\ref{4s17}).

Combining the above three steps, each agent can ultimately compute the set $U$ as
\begin{align}
U:=\Big\{\mu\in\mathbb{R}_+^m:\|\mu\|\leq U_0\Big\},         \label{4s19}
\end{align}
where $U_0:=\frac{N[\max_{i\in[N]}f_i(\check{x})-\min_{i\in[N]}q_i(\check{\mu})]}{\underline{\gamma}}$.

\begin{remark}\label{rr1}
It is worthwhile to mention that the distributed strategy, devised in Section V-A in \cite{mateos2017distributed}, is only valid for the case when local decision variables $x_i$'s do not need to be aligned, while the problem (\ref{1}) involves a global decision variable. What's more, in \cite{mateos2017distributed} each agent can find a positive constant $k_i^*$ independently such that its own iteration is terminated, and subsequently all agents reach a consensus on $z_a:=\max_{i\in[N]}z_i(k_i^*)$, but it cannot ensure that $\sum_{i=1}^N g_i(z_a)<0$. To address the issues arising from the problem (\ref{1}), the procedure in Steps 1 and 2 has been redesigned here.
\end{remark}

\section{A Simulation Example}\label{s6}

This example is used to demonstrate the validity of Algorithm 1 for problem (\ref{1}). Motivated by applications in wireless networks \cite{mateos2017distributed}, consider the following distributed constrained optimization problem over a network of $N=100$ agents:
\begin{align}
&\min_{x\in X_0}~\sum_{i=1}^N f_i(x)=\sum_{i=1}^N \theta_i x           \nonumber\\
&\text{s.t.}~~~~\sum_{i=1}^N g_i(x)=\sum_{i=1}^N \Big(-d_i\log(1+x)+\frac{b}{N}\Big)\leq 0,                          \label{se2}
\end{align}
where $\theta_i,d_i$ and $b$ are some constant scalars. It should be noticed that problem (\ref{se2}) is also considered in \cite{mateos2017distributed}, but without the alignment of local variables $x_{i,k}$'s. Set $X_0=[0,1]$, $b=5$, $\theta_i=i/N\in[0,1]$, $d_i=i/(N+1)\in[0,1]$ for $i\in[N]$, $\alpha_k=1/\sqrt{k},\forall k\geq 1$, and two cases for $Q=2$ and $Q=50$. Applying Algorithm 1 gives evolutions of local variables $x_{i,k}$'s and the averaged objective value, as shown in Figs. \ref{f1} and \ref{f2}. For both cases, the algorithms are convergent to the optimizer $x^*=e^{0.1}-1\approx 0.1052$, thus supporting the theoretical result. Note that the optimizer $x^*$ satisfies the constraint in (\ref{se2}). Moreover, the convergence for $Q=50$ is slower than the case of $Q=2$, showing that stronger connectivity has better convergence performance. To compare with the algorithm in \cite{mateos2017distributed}, it is set $Q=1$ for a balanced graph with all other parameters unchanged as above, and the simulation result is given in Fig. \ref{f3}, indicating that the C-SP-SG algorithm in \cite{mateos2017distributed} is slower than DPPD algorithm here. It is also noted that the convergence in \cite{mateos2017distributed} is provided in an ergodic sense.

\begin{figure}[H]
\centering
\subfigure[]{\includegraphics[width=2.6in]{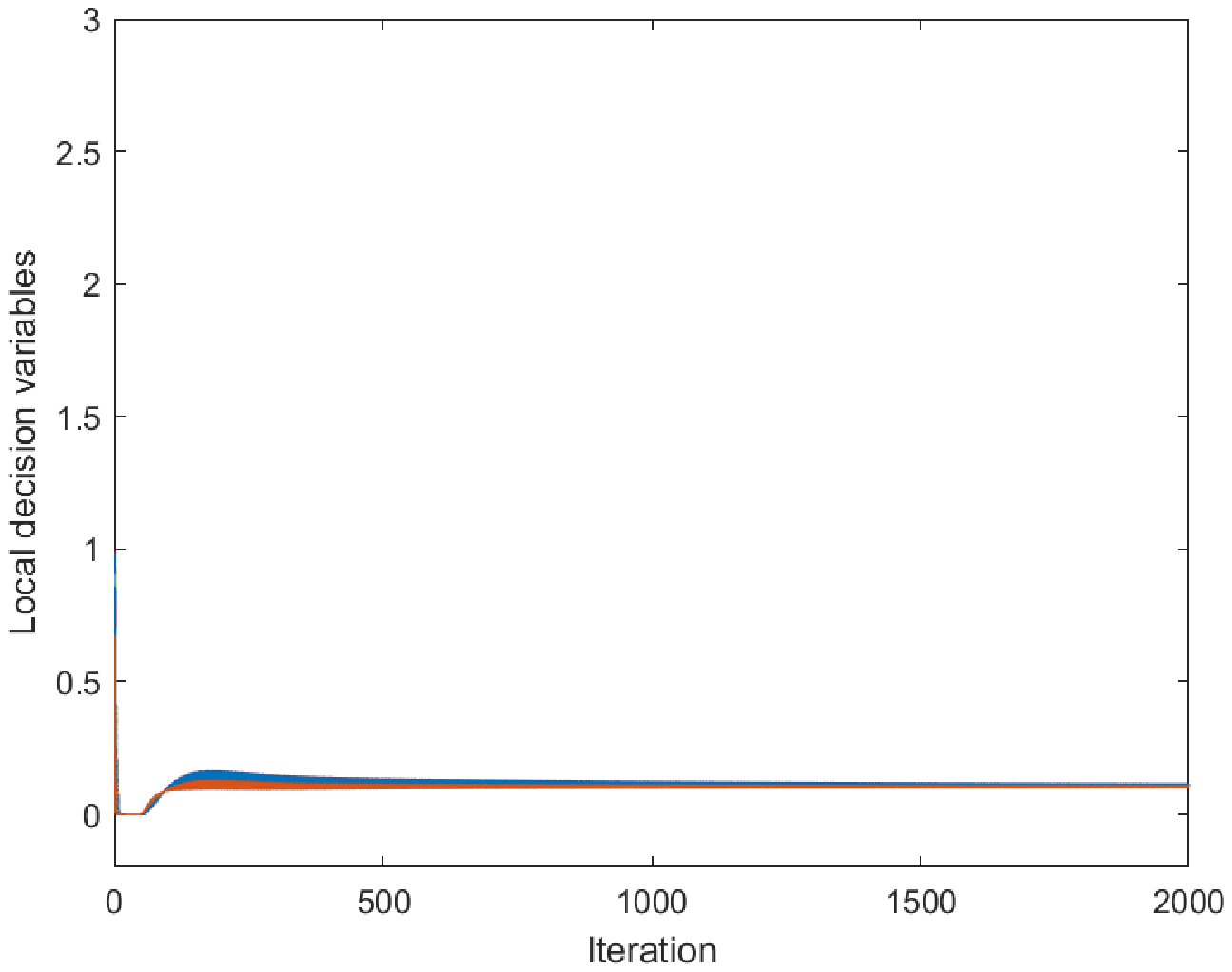}}
\subfigure[]{\includegraphics[width=2.6in]{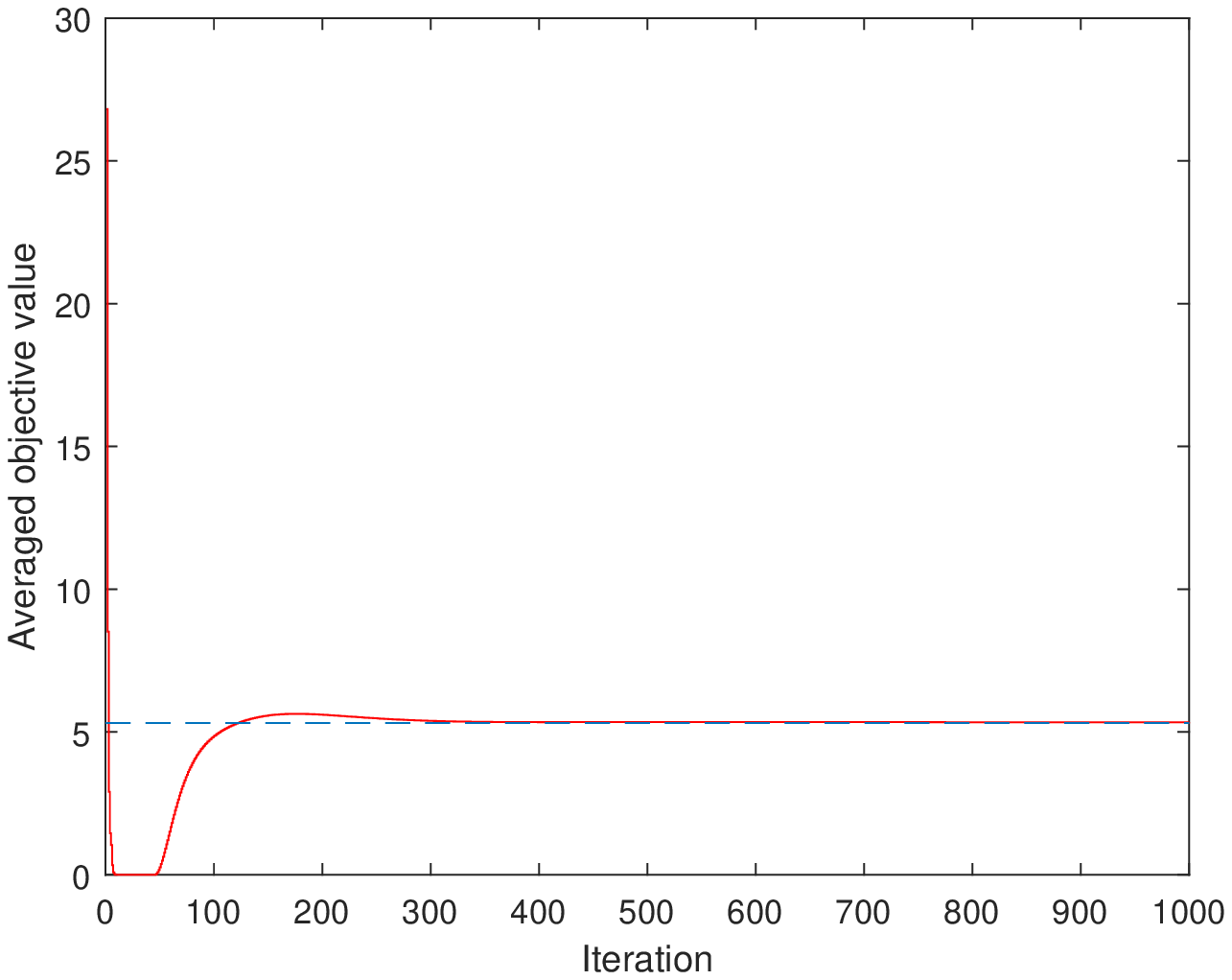}}
\caption{Evolutions of $x_{i,k}$'s in (a) and $\sum_{l=1}^k L(\bar{x}_{l+1},\bar{\mu}_{l+1})/k$ in (b) for $Q=2$, where the dashed line in (b) means the optimal objective value $f^*=50.5(e^{0.1}-1)\approx 5.3111$.}
\label{f1}
\end{figure}

\section{Conclusion}\label{s7}

This paper has investigated distributed convex optimization problems with a feasible set constraint and coupled inequality constraints over time-varying communication graphs. Under mild assumptions, a distributed proximal-based algorithm, called the distributed proximal primal-dual (DPPD) algorithm, was proposed to deal with the considered problem, and the running evaluation error was proved to decrease proportionally to $O(1/\sqrt{k})$, where $k>0$ is the number of iterations. Furthermore, the designed algorithms can be easily generalized to handle more general distributed saddle-point problems. Possible future directions are to address the case of different feasible set constraints, and to attempt to accelerate the convergence speed to that of the centralized proximal point algorithm, i.e., $O(1/k)$.

\begin{figure}[H]
\centering
\subfigure[]{\includegraphics[width=2.6in]{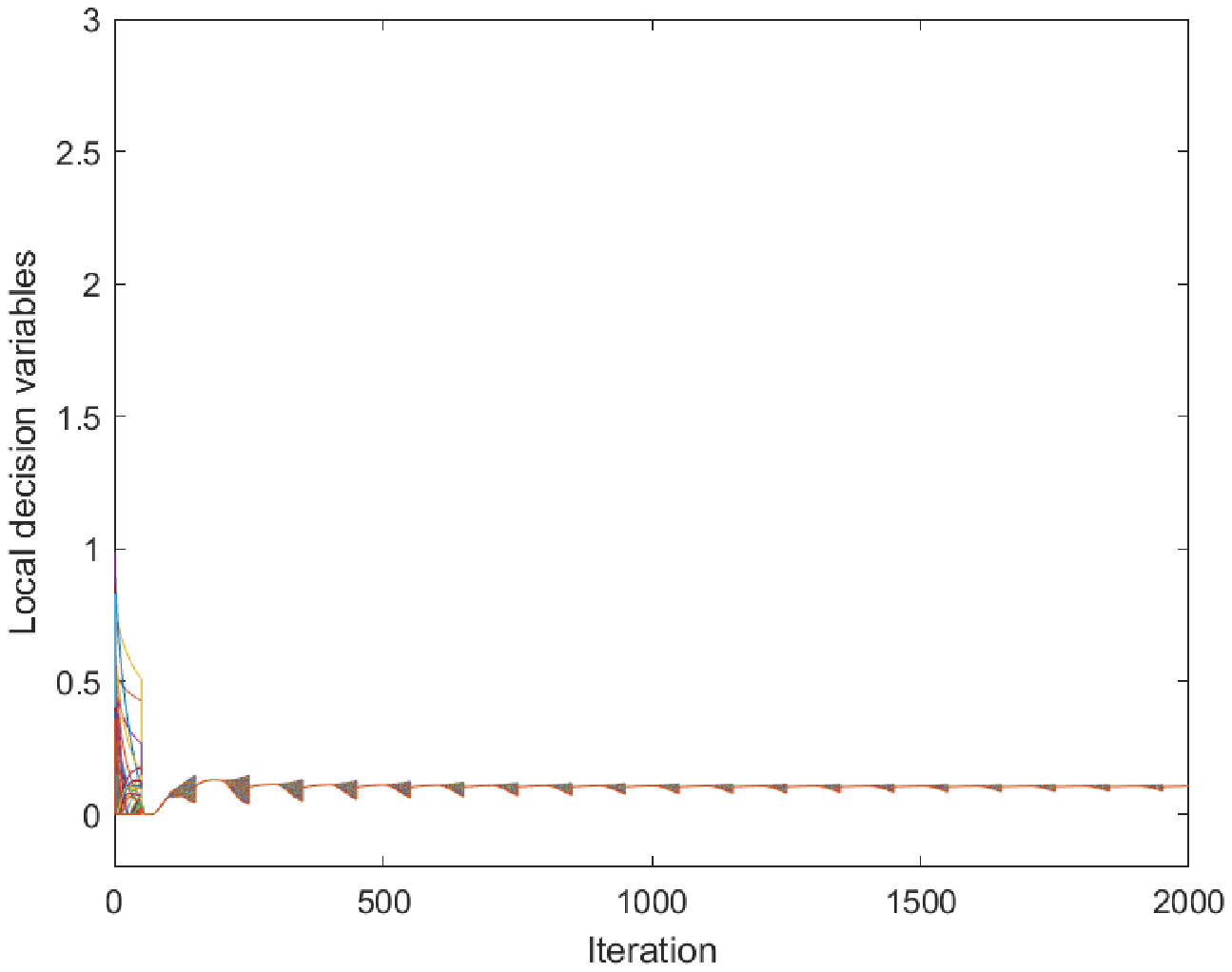}}
\subfigure[]{\includegraphics[width=2.6in]{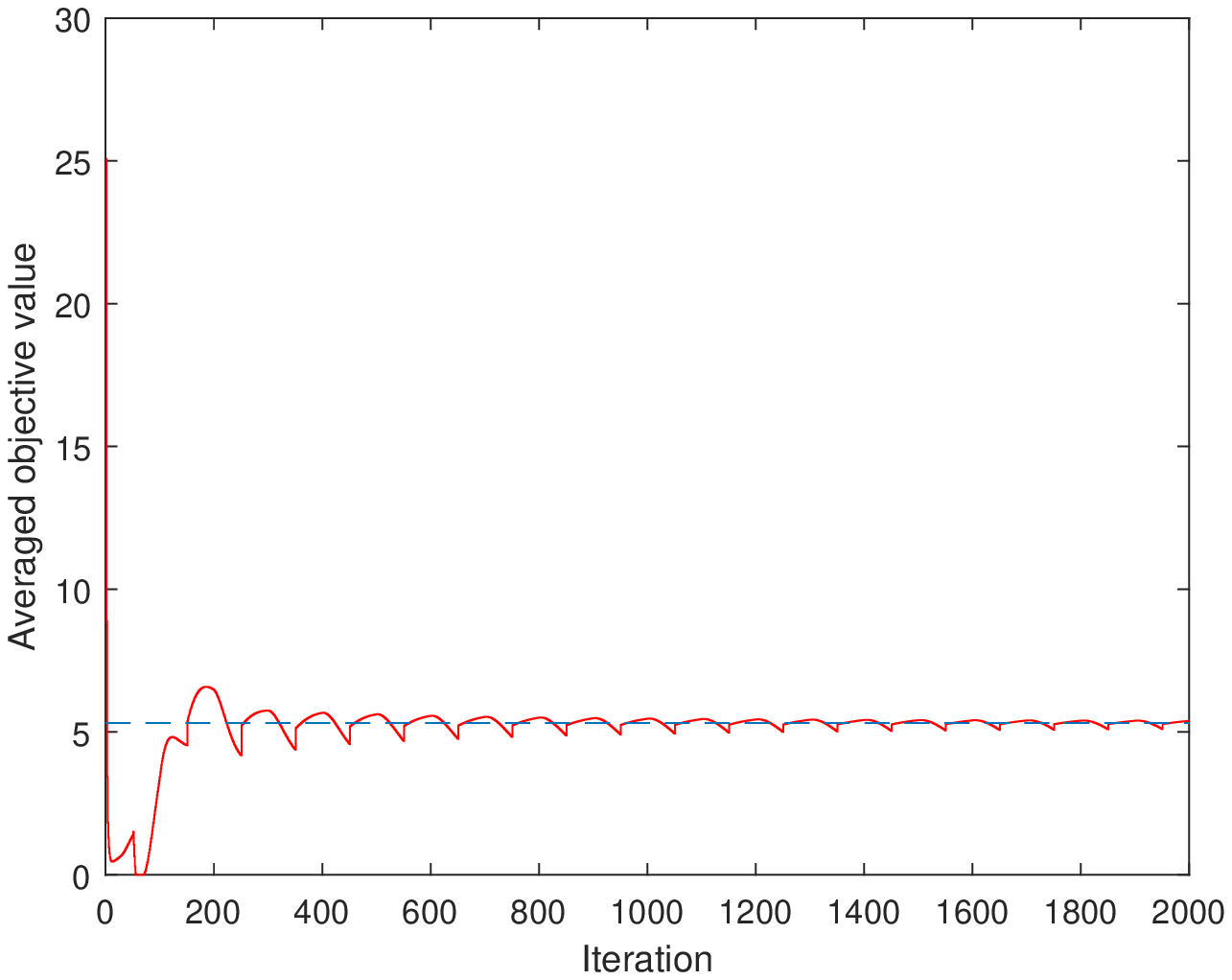}}
\caption{Evolutions of $x_{i,k}$'s in (a) and $\sum_{l=1}^k L(\bar{x}_{l+1},\bar{\mu}_{l+1})/k$ in (b) for $Q=50$, where the dashed line in (b) means the optimal objective value $f^*=50.5(e^{0.1}-1)\approx 5.3111$.}
\label{f2}
\end{figure}

\begin{figure}[H]
\centering
\includegraphics[width=2.6in]{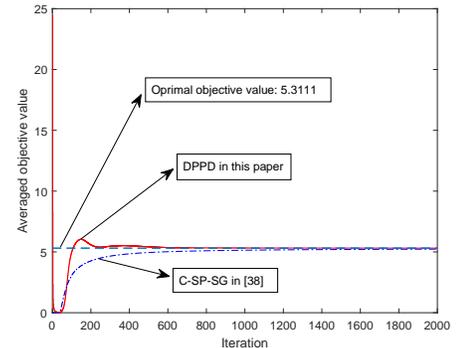}
\caption{Evolutions of $\sum_{l=1}^k L(\bar{x}_{l+1},\bar{\mu}_{l+1})/k$ for DPPD in this paper and $\sum_{i=1}^N L_i(\sum_{l=1}^k x_{i,l}/k,\sum_{l=1}^k\mu_{i,l}/k)$ for C-SP-SG in \cite{mateos2017distributed}, with $Q=1$.}
\label{f3}
\end{figure}

\section*{Acknowledgment}

The authors are grateful to the Editor, the Associate Editor and the anonymous reviewers for their insightful suggestions.

\section*{Appendix}

\subsection{Proof of Lemma \ref{l2}}\label{ap1}

By virtue of the optimality condition for (\ref{ag3-2}) (e.g., see \cite{bertsekas2003convex}), one can obtain that for all $i\in[N]$
\begin{align}
0\in \partial_x L_i(x_{i,k+1},\hat{\mu}_{i,k})+\frac{1}{\alpha_k}(x_{i,k+1}-\hat{x}_{i,k})+\textsc{N}_{X_0}(x_{i,k+1}),      \nonumber
\end{align}
where $\textsc{N}_{X_0}(x_{i,k+1})$ represents the normal cone to the set $X_0$ at the point $x_{i,k+1}$, which amounts to that for all $y\in X_0$
\begin{align}
\Big\langle\partial_x L_i(x_{i,k+1},\hat{\mu}_{i,k})+\frac{1}{\alpha_k}(x_{i,k+1}-\hat{x}_{i,k}),y-x_{i,k+1}\Big\rangle\geq 0,        \label{ss10}
\end{align}
further implying that for all $y\in X_0$
\begin{align}
&\langle x_{i,k+1}-\hat{x}_{i,k},x_{i,k+1}-y\rangle\leq \alpha_k\langle \partial_x L_i(x_{i,k+1},\hat{\mu}_{i,k}),y-x_{i,k+1}\rangle       \nonumber\\
&\hspace{2.8cm}\leq \alpha_k [L_i(y,\hat{\mu}_{i,k})-L_i(x_{i,k+1},\hat{\mu}_{i,k})],            \label{ss11}
\end{align}
where the second inequality follows from the convexity of $L_i$ in the first variable. Note that there exists a positive constant $U_0$ such that $\|\mu_{i,k}\|\leq U_0$ for all $i\in[N]$ and all $k\geq 0$ since $U$ is a bounded set. Subsequently, by setting $y=\hat{x}_{i,k}$ in (\ref{ss11}), one can get that
\begin{align}
\|x_{i,k+1}-\hat{x}_{i,k}\|^2&\leq \alpha_k [L_i(\hat{x}_{i,k},\hat{\mu}_{i,k})-L_i(x_{i,k+1},\hat{\mu}_{i,k})]       \nonumber\\
&\leq \alpha_k\langle \partial_x L_i(\hat{x}_{i,k},\hat{\mu}_{i,k}),\hat{x}_{i,k}-x_{i,k+1}\rangle       \nonumber\\
&\leq \alpha_k S(1+U_0)\|x_{i,k+1}-\hat{x}_{i,k}\|,                                          \nonumber
\end{align}
where we have used the convexity of $L_i$ in the first variable, and the last inequality is due to Cauchy-Schwarz inequality and (\ref{6}).  Therefore, it can be concluded that
\begin{align}
\|x_{i,k+1}-\hat{x}_{i,k}\|\leq S(1+U_0)\alpha_k,                                          \label{ss12}
\end{align}
which leads to
\begin{align}
\|x_{k+1}-(A_k\otimes I_n) x_k\|\leq \sqrt{N}S(1+U_0)\alpha_k,                                          \label{ss13}
\end{align}
where $x_{k}=col(x_{1,k},\ldots,x_{N,k})$ for $k\geq 0$. By defining $\epsilon_k=x_{k+1}-(A_k\otimes I_n) x_k$, it is easy to see that
\begin{align}
x_{k+1}=(A_k\otimes I_n)x_k+\epsilon_k,                 \label{ss14}
\end{align}
where $\|\epsilon_k\|\leq \sqrt{N}S(1+U_0)\alpha_k$ by (\ref{ss13}). Now consider dynamics (\ref{ss14}). Following the same argument as in Lemmas 3 and 4 in \cite{xie2018distributed}, one can obtain that
\begin{align}
\|x_{i,k}-\bar{x}_k\|=O(\alpha_{\lfloor\frac{k}{2}\rfloor}).         \label{ss15}
\end{align}

At the same time, invoking (\ref{5}), (\ref{4s00}) and the nonexpansiveness of projections, one can obtain that
\begin{align}
\|\mu_{i,k+1}-\hat{\mu}_{i,k}\|\leq \alpha_k \|g_i(x_{i,k+1})\|\leq E\alpha_k.         \label{ss0}
\end{align}
As done for obtaining (\ref{ss15}), it can be similarly concluded that
\begin{align}
\|\mu_{i,k}-\bar{\mu}_k\|=O(\alpha_{\lfloor\frac{k}{2}\rfloor}).         \label{ass15}
\end{align}
This completes the proof.

\subsection{Proof of Theorem \ref{t3}}\label{ap2}

Let us focus on the distances from each $x_{i,k}$ and $\mu_{i,k}$ to optimum sets $X^*$ and $U^*$, respectively. Define $p_k:=\sum_{j=1}^N a_{ij,k}P_{X^*}(x_{j,k})$. Then, one has that for each $i\in[N]$
\begin{align}
&\|x_{i,k+1}\|_{X^*}^2\leq\|x_{i,k+1}-p_k\|^2=\|x_{i,k+1}-\hat{x}_{i,k}+\hat{x}_{i,k}-p_k\|^2                    \nonumber\\
&=\|\hat{x}_{i,k}-p_k\|^2+\|x_{i,k+1}-\hat{x}_{i,k}\|^2+2\langle x_{i,k+1}-\hat{x}_{i,k},\hat{x}_{i,k}-p_k\rangle          \nonumber\\
&=\|\hat{x}_{i,k}-p_k\|^2-\|x_{i,k+1}-\hat{x}_{i,k}\|^2             \nonumber\\
&\hspace{0.4cm}+2\langle x_{i,k+1}-\hat{x}_{i,k},x_{i,k+1}-p_k\rangle           \nonumber\\
&\leq \sum_{j=1}^Na_{ij,k}\|x_{j,k}\|_{X^*}^2-\|x_{i,k+1}-\hat{x}_{i,k}\|^2          \nonumber\\
&\hspace{0.4cm}+2\alpha_k [L_i(p_k,\hat{\mu}_{i,k})-L_i(x_{i,k+1},\hat{\mu}_{i,k})],             \label{ss16}
\end{align}
where the last inequality is due to the convexity of norms and (\ref{ss11}) with $y=p_k$.

Similarly, define $q_k:=\sum_{j=1}^N a_{ij,k}P_{U^*}(\mu_{j,k})$, it yields that
\begin{align}
\|\mu_{i,k+1}\|_{U_i^*}^2 &\leq \|\mu_{i,k+1}-q_k\|^2\leq \|\hat{\mu}_{i,k}+\alpha_k g_i(x_{i,k+1})-q_k\|^2           \nonumber\\
&\leq \sum_{j=1}^N a_{ij,k}\|\mu_{j,k}\|_{U^*}^2+\alpha_k^2E^2                 \nonumber\\
&\hspace{0.4cm}+2\alpha_k[L_i(x_{i,k+1},\hat{\mu}_{i,k})-L_i(x_{i,k+1},q_k)],      \label{ss01}
\end{align}
where the second inequality is due to (\ref{4s00}) and the nonexpansiveness of projections, and the convexity of norms and (\ref{5}) have been employed in the last inequality.

Now, define $\phi_{i,k}=\|x_{i,k}\|_{X^*}^2+\|\mu_{i,k}\|_{U^*}^2$ for all $k\geq 0$ and $\phi_k=\sum_{i=1}^N \phi_{i,k}$. Then, summing (\ref{ss16}) and (\ref{ss01}) yields
\begin{align}
\phi_{i,k+1}&\leq \sum_{j=1}^N a_{ij,k}\phi_{j,k}+\alpha_k^2 E^2           \nonumber\\
&\hspace{0.4cm}+2\alpha_k[L_i(p_k,\hat{\mu}_{i,k})-L_i(x_{i,k+1},q_k)].          \label{ss02}
\end{align}

In the following, let us consider the term $L_i(p_k,\hat{\mu}_{i,k})-L_i(x_{i,k+1},q_k)$ in (\ref{ss02}). It is straightforward to see that
\begin{align}
&L_i(p_k,\hat{\mu}_{i,k})-L_i(x_{i,k+1},q_k)=L_i(p_k,\hat{\mu}_{i,k})-L_i(P_{X^*}(\bar{x}_k),\hat{\mu}_{i,k})            \nonumber\\
&\hspace{0.4cm}+L_i(P_{X^*}(\bar{x}_k),\hat{\mu}_{i,k})-L_i(P_{X^*}(\bar{x}_k),\bar{\mu}_k)              \nonumber\\
&\hspace{0.4cm}+L_i(P_{X^*}(\bar{x}_k),\bar{\mu}_k)-L_i(\bar{x}_k,P_{U^*}(\bar{\mu}_{k}))        \nonumber\\
&\hspace{0.4cm}+L_i(\bar{x}_k,P_{U^*}(\bar{\mu}_{k}))-L_i(x_{i,k+1},P_{U^*}(\bar{\mu}_k))        \nonumber\\
&\hspace{0.4cm}+L_i(x_{i,k+1},P_{U^*}(\bar{\mu}_k))-L_i(x_{i,k+1},q_k).            \label{ss17}
\end{align}

To proceed, first observe the following facts:
\begin{align}
\|p_k-P_{X^*}(\bar{x}_k)\|&\leq \sum_{j=1}^N a_{ij,k}\|P_{X^*}(x_{j,k})-P_{X^*}(\bar{x}_k)\|          \nonumber\\
&\leq \sum_{j=1}^N a_{ij,k}\|x_{j,k}-\bar{x}_k\|,          \label{ss03}\\
\|\hat{x}_{i,k}-\bar{x}_k\|&\leq \sum_{j=1}^Na_{ij,k}\|x_{j,k}-\bar{x}_k\|           \nonumber\\
&\leq \sum_{j=1}^Na_{ij,k}\sum_{l=1}^N\frac{1}{N}\|x_{j,k}-x_{l,k}\|      \nonumber\\
&\leq \max_{j,l\in[N]}\|x_{j,k}-x_{l,k}\|.          \label{ss04}
\end{align}

Then, invoking the convexity of $L_i$ in the first argument, (\ref{6}) and $\|\mu_{i,k}\|\leq U_0$ results in
\begin{align}
&|L_i(p_k,\hat{\mu}_{i,k})-L_i(P_{X^*}(\bar{x}_k),\hat{\mu}_{i,k})|\leq S(1+U_0)\|p_k-P_{X^*}(\bar{x}_k)\|                      \nonumber\\
&\hspace{3.2cm}\leq S(1+U_0)\sum_{j=1}^N a_{ij,k}\|x_{j,k}-\bar{x}_k\|,                                     \label{ss18}
\end{align}
where the last inequality is due to (\ref{ss03}). Similarly, one can get that
\begin{align}
&|L_i(\bar{x}_k,P_{U^*}(\bar{\mu}_{k}))-L_i(x_{i,k+1},P_{U^*}(\bar{\mu}_{k}))|          \nonumber\\
&\leq S(1+U_0)\|x_{i,k+1}-\bar{x}_k\|                      \nonumber\\
&\leq S(1+U_0)[\|x_{i,k+1}-\hat{x}_{i,k}\|+\|\hat{x}_{i,k}-\bar{x}_k\|].                      \label{ss19}
\end{align}

Similar to (\ref{ss03}) and (\ref{ss04}), one can obtain that
\begin{align}
\|q_k-P_{U^*}(\bar{\mu}_k)\|&\leq \sum_{j=1}^N a_{ij,k}\|\mu_{j,k}-\bar{\mu}_k\|,               \label{ss003}\\
\|\hat{\mu}_{i,k}-\bar{\mu}_k\|&\leq \max_{j,l\in[N]}\|\mu_{j,k}-\mu_{l,k}\|,                   \label{ss004}
\end{align}
by which invoking similar arguments to (\ref{ss18}) and (\ref{ss19}) implies that
\begin{align}
&|L_i(x_{i,k+1},P_{U^*}(\bar{\mu}_k))-L_i(x_{i,k+1},q_k)|\leq E\|q_k-P_{U^*}(\bar{\mu}_k)\|                      \nonumber\\
&\hspace{0.5cm}\leq E\sum_{j=1}^N a_{ij,k}\|P_{U^*}(\mu_{j,k})-P_{U^*}(\bar{\mu}_k)\|           \nonumber\\
&\hspace{0.5cm}\leq E\sum_{j=1}^N a_{ij,k}\|\mu_{j,k}-\bar{\mu}_k\|,                                     \label{ss018}
\end{align}
\begin{align}
&|L_i(P_{X^*}(\bar{x}_k),\hat{\mu}_{i,k})-L_i(P_{X^*}(\bar{x}_k),\bar{\mu}_{k})|\leq E\|\hat{\mu}_{i,k}-\bar{\mu}_k\|.                      \label{ss019}
\end{align}

By summing (\ref{ss02}) over $i\in[N]$ and making use of (\ref{ss12}), (\ref{ss15})-(\ref{ass15}), (\ref{ss17}), (\ref{ss04})-(\ref{ss19}), (\ref{ss018}) and (\ref{ss019}), one has that
\begin{align}
\phi_{k+1}&\leq \phi_{k}+c_1\alpha_{\lfloor\frac{k}{2}\rfloor}^2+2\alpha_k[L(P_{X^*}(\bar{x}_k),\bar{\mu}_{k})           \nonumber\\
&\hspace{0.4cm}-L(\bar{x}_{k},P_{U^*}(\bar{\mu}_{k}))],                              \label{ss20}
\end{align}
for some constant $c_1>0$. It should be noted that $L(P_{X^*}(\bar{x}_k),\bar{\mu}_{k})-L(\bar{x}_{k},P_{U^*}(\bar{\mu}_{k}))\leq 0$ by the definition of saddle points in (\ref{c3}), and the equality holds if and only if $\bar{x}_k\in X^*$ and $\bar{\mu}_{k}\in U^*$. In addition, because $\alpha_k\to 0$ as $k\to \infty$, for any $\epsilon>0$, there must exist an integer $k_{\epsilon}>0$ such that $\alpha_{\lfloor\frac{k}{2}\rfloor}\leq \epsilon$ for all $k\geq k_\epsilon$. This, together with (\ref{ss20}), yields that for all $k\geq k_{\epsilon}$
\begin{align}
\phi_{k+1}&\leq \phi_{k}+c_1\epsilon\alpha_{\lfloor\frac{k}{2}\rfloor}+2\alpha_k[L(P_{X^*}(\bar{x}_k),\bar{\mu}_{k})           \nonumber\\
&\hspace{0.4cm}-L(\bar{x}_{k},P_{U^*}(\bar{\mu}_{k}))],                              \label{ss21}
\end{align}
which is exactly in the same form as (22) in \cite{liu2017convergence}. As a result, invoking the same reasoning as for (22) in \cite{liu2017convergence} yields that $\lim_{k\to\infty}\phi_k=0$, which together with Lemma \ref{l2} follows that all $x_{i,k}$'s and $\mu_{i,k}$'s will reach a common point in $X^*$ and a common point in $U^*$, respectively, which finishes the proof of the first part.

In the following, it remains to prove the convergence rate of the evaluation error of Algorithm 1. It is easy to observe that the function $L_i(x,\hat{\mu}_{i,k})+\frac{1}{2\alpha_k}\|x-\hat{x}_{i,k}\|^2$ is $1/\alpha_k$-strongly convex in $x$, which leads to that for all $x,y\in X_0$,
\begin{align}
&L_i(x,\hat{\mu}_{i,k})+\frac{1}{2\alpha_k}\|x-\hat{x}_{i,k}\|^2       \nonumber\\
&\geq L_i(y,\hat{\mu}_{i,k})+\frac{1}{2\alpha_k}\|y-\hat{x}_{i,k}\|^2+\frac{1}{2\alpha_k}\|x-y\|^2            \nonumber\\
&\hspace{0.4cm}+\Big\langle\partial_x L_i(y,\hat{\mu}_{i,k})+\frac{1}{\alpha_k}(y-\hat{x}_{i,k}),x-y\Big\rangle.    \label{ss23}
\end{align}
Substituting $x=x^*\in X^*$ and $y=x_{i,k+1}$ into (\ref{ss23}) yields
\begin{align}
&L_i(x^*,\hat{\mu}_{i,k})+\frac{1}{2\alpha_k}\|x^*-\hat{x}_{i,k}\|^2       \nonumber\\
&\geq L_i(x_{i,k+1},\hat{\mu}_{i,k})+\frac{1}{2\alpha_k}\|x_{i,k+1}-\hat{x}_{i,k}\|^2         \nonumber\\
&\hspace{0.4cm}+\frac{1}{2\alpha_k}\|x_{i,k+1}-x^*\|^2,         \label{ss24}
\end{align}
where we have used the fact (\ref{ss10}). It further follows from (\ref{ss24}) that
\begin{align}
&L_i(\bar{x}_{k+1},\bar{\mu}_{k+1})-L_i(x^*,\bar{\mu}_{k})       \nonumber\\
&\leq \frac{1}{2\alpha_k}\sum_{j=1}^N a_{ij,k}\|x_{j,k}-x^*\|^2-\frac{1}{2\alpha_k}\|x_{i,k+1}-x^*\|^2       \nonumber\\
&\hspace{0.4cm} +L_i(x_{i,k+1},\bar{\mu}_{k+1})-L_i(x_{i,k+1},\hat{\mu}_{i,k})            \nonumber\\
&\hspace{0.4cm} +L_i(x^*,\hat{\mu}_{i,k})-L_i(x^*,\bar{\mu}_k)                         \nonumber\\
&\hspace{0.4cm} +L_i(\bar{x}_{k+1},\bar{\mu}_{k+1})-L_i(x_{i,k+1},\bar{\mu}_{k+1}),               \label{ss25}
\end{align}
where the convexity of norms has been used in the inequality. Next, notice that $L_i(x_{i,k+1},\bar{\mu}_{k+1})-L_i(x_{i,k+1},\hat{\mu}_{i,k})=L_i(x_{i,k+1},\bar{\mu}_{k+1})-L_i(x_{i,k+1},\mu_{i,k+1})+L_i(x_{i,k+1},\mu_{i,k+1})-L_i(x_{i,k+1},\hat{\mu}_{i,k})$. By resorting to (\ref{6}) and the convex-concave property of $L_i$ as well as (\ref{ss12}), (\ref{ss15}) and (\ref{ss0})-(\ref{ass15}), it follows, similar to (\ref{ss18})-(\ref{ss19}), that
\begin{align}
L_i(x_{i,k+1},\bar{\mu}_{k+1})-L_i(x_{i,k+1},\hat{\mu}_{i,k})&=O(\alpha_{\lfloor\frac{k}{2}\rfloor}),           \nonumber\\
L_i(x^*,\hat{\mu}_{i,k})-L_i(x^*,\bar{\mu}_{k})&=O(\alpha_{\lfloor\frac{k}{2}\rfloor}).                    \label{ss025}
\end{align}
Meanwhile, similar to (\ref{ss19}), invoking (\ref{ss12}), (\ref{ss15}), and (\ref{ss04}) yields
\begin{align}
L_i(\bar{x}_{k+1},\bar{\mu}_{k+1})-L_i(x_{i,k+1},\bar{\mu}_{k+1})=O(\alpha_{\lfloor\frac{k}{2}\rfloor}).    \label{1v1}
\end{align}
Summing (\ref{ss25}) over $i$ and recalling that $\sum_{i=1}^Na_{ij,k}=1$, in view of (\ref{ss025}) and (\ref{1v1}), one has
\begin{align}
&L(\bar{x}_{k+1},\bar{\mu}_{k+1})-L(x^*,\bar{\mu}_k)       \nonumber\\
&\leq \frac{1}{2\alpha_k}\|x_{k}-\underline{x}^*\|^2-\frac{1}{2\alpha_k}\|x_{k+1}-\underline{x}^*\|^2+c_2\alpha_{\lfloor\frac{k}{2}\rfloor},           \label{ss26}
\end{align}
for some constant $c_2>0$, where $\underline{x}^*:=1_N\otimes x^*$ for brevity. Note that $L(x^*,\bar{\mu}_k)\leq L(x^*,\mu^*)=f^*$, where $\mu^*\in U^*$. Now summing (\ref{ss26}) over finite times $l=1,\ldots,k$ leads to
\begin{align}
&\sum_{l=1}^k L(\bar{x}_{l+1},\bar{\mu}_{l+1})-kf^*       \nonumber\\
&\leq \frac{1}{2\alpha_1}\|x_{1}-\underline{x}^*\|^2+\frac{1}{2}\sum_{l=2}^k\big(\frac{1}{\alpha_l}-\frac{1}{\alpha_{l-1}}\big)\|x_l-\underline{x}^*\|^2              \nonumber\\
&\hspace{0.4cm}-\frac{1}{2\alpha_k}\|x_{k+1}-\underline{x}^*\|^2+c_2\sum_{l=1}^k\alpha_{\lfloor\frac{l}{2}\rfloor},           \label{ss27}
\end{align}
which, together with $\|x_l-\underline{x}^*\|^2\leq 4ND^2$ for $l\in[k]$, implies that
\begin{align}
\frac{\sum_{l=1}^k L(\bar{x}_{l+1},\bar{\mu}_{l+1})}{k}-f^*\leq \frac{2ND^2}{k\alpha_k}+\frac{c_2\sum_{l=1}^k\alpha_{\lfloor\frac{l}{2}\rfloor}}{k}.           \label{ss28}
\end{align}

Similar to (\ref{ss23})-(\ref{ss25}), given that $L_i(x_{i,k+1},\mu)-\frac{1}{2\alpha_k}\|\mu-\hat{\mu}_{i,k}\|^2$ is $1/\alpha_k$-strongly concave in $\mu$, it can be asserted that for each $i\in[N]$
\begin{align}
&L_i(\bar{x}_{k+1},\bar{\mu}_{k+1})-L_i(\bar{x}_{k+1},\mu^*)       \nonumber\\
&\geq -\frac{1}{2\alpha_k}\|\mu^*-\hat{\mu}_{i,k}\|^2+\frac{1}{2\alpha_k}\|\mu^*-\mu_{i,k+1}\|^2       \nonumber\\
&\hspace{0.4cm}+L_i(x_{i,k+1},\mu^*)-L_i(\bar{x}_{k+1},\mu^*)               \nonumber\\
&\hspace{0.4cm}+L_i(x_{i,k+1},\bar{\mu}_{k+1})-L_i(x_{i,k+1},\mu_{i,k+1})   \nonumber\\
&\hspace{0.4cm}+L_i(\bar{x}_{k+1},\bar{\mu}_{k+1})-L_i(x_{i,k+1},\bar{\mu}_{k+1}),             \label{ss29}
\end{align}
where $\mu^*$ is an optimal point in $U^*$. With reference to (\ref{6}), $\|\mu^*\|\leq U_0$ and the convexity of $L_i$ in its first variable, it follows that
\begin{align}
&L_i(x_{i,k+1},\mu_i^*)-L_i(\bar{x}_{k+1},\mu_i^*)\geq -S(1+U_0)\|x_{i,k+1}-\bar{x}_{k+1}\|               \nonumber\\
&\hspace{4.1cm}\geq -c_3\alpha_{\lfloor\frac{k}{2}\rfloor},                                  \label{ss30}
\end{align}
\begin{align}
&L_i(x_{i,k+1},\bar{\mu}_{k+1})-L_i(x_{i,k+1},\mu_{i,k+1})         \nonumber\\
&\hspace{1.5cm}\geq -S(1+U_0)\|\mu_{i,k+1}-\bar{\mu}_{k+1}\|\geq -c_4\alpha_{\lfloor\frac{k}{2}\rfloor},          \label{ss030}
\end{align}
for some constants $c_3,c_4>0$, where (\ref{ss15}) and (\ref{ass15}) are utilized for the last inequalities in (\ref{ss30}) and (\ref{ss030}), respectively. Also, by (\ref{1v1}), it follows that
\begin{align}
L_i(\bar{x}_{k+1},\bar{\mu}_{k+1})-L_i(x_{i,k+1},\bar{\mu}_{k+1})\geq -c_5 \alpha_{\lfloor\frac{k}{2}\rfloor},          \label{2v1}
\end{align}
for some constant $c_5>0$. Then, by substituting (\ref{ss30})-(\ref{2v1}) into (\ref{ss29}) and summing it over $i\in[N]$, one can obtain that
\begin{align}
&L(\bar{x}_{k+1},\bar{\mu}_{k+1})-L(\bar{x}_{k+1},\mu^*)       \nonumber\\
&\geq -\frac{1}{2\alpha_k}\|\underline{\mu}^*-\mu_{k}\|^2+\frac{1}{2\alpha_k}\|\underline{\mu}^*-\mu_{k+1}\|^2-c_0\alpha_{\lfloor\frac{k}{2}\rfloor},             \label{ss31}
\end{align}
where $\underline{\mu}^*:=1_N\otimes \mu^*$ and $c_0:=N(c_3+c_4+c_5)$. Keeping $L(\bar{x}_{k+1},\mu^*)\geq L(x^*,\mu^*)=f^*$ in mind, as done in (\ref{ss27}) and (\ref{ss28}), it can be finally deduced that
\begin{align}
\frac{\sum_{l=1}^k L(\bar{x}_{l+1},\bar{\mu}_{l+1})}{k}-f^*\geq -\frac{2NU_0^2}{k\alpha_k}-\frac{c_0\sum_{l=1}^k\alpha_{\lfloor\frac{l}{2}\rfloor}}{k},             \label{ss32}
\end{align}
which together with (\ref{ss28}) gives rise to
\begin{align}
&\Big|\frac{\sum_{l=1}^k L(\bar{x}_{l+1},\bar{\mu}_{l+1})}{k}-f^*\Big|            \nonumber\\
&\leq \frac{2N\max\{D^2,U_0^2\}}{k\alpha_k}+\frac{\max\{c_0,c_2\}\sum_{l=1}^k\alpha_{\lfloor\frac{l}{2}\rfloor}}{k}.    \label{ss33}
\end{align}
This completes the proof by noting that $\alpha_{\lfloor\frac{l}{2}\rfloor}=O(\alpha_l)$ and $\sum_{l=1}^k\alpha_l=O(\sqrt{k})$ when $\alpha_l=1/\sqrt{l}$.



\end{document}